\documentclass[12pt]{article}
\usepackage{amsfonts}
\usepackage{amssymb}
\usepackage{amsmath}
\usepackage{amsthm}

\newtheorem{proposition}{proposition}[section]
\newtheorem{lemma}[proposition]{Lemma}

\newtheorem{theorem}[proposition]{Theorem}
\newtheorem{algorithm}[proposition]{Algorithm}

\date{1 April 2009}
\title{Three-coloring triangle-free planar graphs in linear time\footnote{An extended abstract of this paper appeared in ACM-SIAM Symposium on Discrete Algorithms (SODA'09). 
Published in {\em ACM Transactions on Algorithms} {\bf7} (2011), Article 41, 14 pages.}}
\author{Zden\v{e}k Dvo\v{r}\'ak\thanks{Department of Applied Mathematics,
Charles University, Prague, Czech Republic.}
\and
Ken-ichi Kawarabayashi\thanks{National Institute of Informatics,
2-1-2 Hitotsubashi, Chiyoda-ku, Tokyo 101-8430, Japan.}
\and
Robin Thomas\thanks{School of Mathematics,
Georgia Institute of Technology,
Atlanta, Georgia  30332-0160, USA.
Partially supported by NSF under Grant No.~DMS-0701077.}}

\begin{document}
\maketitle
\begin{abstract}

Gr\"otzsch's theorem states that every triangle-free planar graph is
$3$-colorable, and several relatively simple proofs of this fact were provided
by Thomassen and other authors.  It is easy to convert these proofs into
quadratic-time algorithms to find a $3$-coloring, but it is not clear how to
find such a coloring in linear time (Kowalik used a nontrivial data structure
to construct an $O(n \log n)$ algorithm).

We design a linear-time algorithm to find a $3$-coloring of a given
triangle-free planar graph.  The algorithm avoids using any complex data structures,
which makes it easy to implement.  As a by-product we give
a yet simpler proof of Gr\"otzsch's theorem.
\end{abstract}

\section{Introduction}
\label{intro}

The following is a classical theorem of Gr\"otzsch~\cite{Gro}.

\begin{theorem}
\label{grotzsch}
Every triangle-free planar graph is $3$-colorable.
\end{theorem}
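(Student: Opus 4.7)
The plan is to prove, by induction on $|V(G)|$, a substantially stronger ``precoloring extension'' statement in the spirit of Thomassen: if $G$ is a triangle-free planar graph drawn so that its outer face is bounded by a cycle $C$ of length at most some small constant $k$ (say $k=9$), and $\varphi$ is a proper $3$-coloring of $C$, then $\varphi$ extends to a proper $3$-coloring of all of $G$, unless a short, explicitly listed family of small obstructions appears inside $C$. Grötzsch's theorem then follows immediately: given any triangle-free planar graph, embed it in the plane, pick any face, pre-color its boundary (which has length at least $4$ since the graph is triangle-free), and extend.

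First I would carry out standard reductions. Assume $G$ is $2$-connected, since otherwise one handles blocks separately and glues colorings along cut vertices. If $C$ has a chord $e$, split $G$ along $e$ into two triangle-free plane subgraphs, each bounded by a cycle strictly shorter than $C$, and apply induction. If there is a separating cycle $C'$ of length at most $k$, first color everything inside $C'$ by induction (keeping $\varphi$ on $C$), which produces a coloring of $C'$; then extend that coloring to the outside by induction. After these reductions one may assume that every cycle of length at most $k$ in $G$ bounds a face.

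At this point I would look for a \emph{reducible configuration}. An Euler-formula count for plane triangle-free graphs with girth-type constraints forces the existence of a short face, a boundary vertex of small degree, or a vertex of degree $3$ not on $C$, and each of these can be used as a reduction: delete a carefully chosen vertex, identify two of its neighbors, or contract a suitable edge, producing a smaller triangle-free plane graph whose outer cycle has length at most $k$ and to which a suitably modified precoloring extends by induction. Re-extending to the deleted vertex is immediate since only three colors are forbidden in the worst case.

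The main obstacle, typical for arguments of this shape, is twofold. First, one must design the list of exceptional ``bad'' configurations tightly enough that the inductive statement is strong enough for the case analysis, yet weak enough that every reduction genuinely produces a smaller instance still satisfying the hypothesis. Second, and more delicate, when the reduction identifies two vertices it can create a triangle; ruling this out requires simultaneously exploiting planarity, triangle-freeness, and the assumption that no cycle of length at most $k$ is separating. This interaction between the local reduction and the global structural hypotheses is where almost all of the technical work concentrates, and it is also where a cleanly chosen strengthening of the induction hypothesis pays the largest dividends.
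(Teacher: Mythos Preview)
Your plan follows Thomassen's original route: strengthen the statement to a precoloring-extension theorem (every proper $3$-coloring of a short outer cycle extends, modulo a short list of obstructions) and prove that by induction. This is a valid approach, and the paper explicitly credits Thomassen's argument as the source of its ideas. The paper, however, takes a more direct path: it proves Gr\"otzsch's theorem itself by induction on $|V(G)|$, with no precoloring hypothesis and no obstruction list. After reducing to minimum degree at least three, the key lemma (Lemma~\ref{safegrams}) guarantees a \emph{safe} tetragram, pentagram, or hexagram---a short facial cycle equipped with explicit local conditions ensuring that a specified vertex identification (or, in the pentagram case, a deletion followed by two identifications) yields a strictly smaller triangle-free planar graph whose $3$-coloring extends back. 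That lemma is in turn proved by a brief discharging argument (Lemma~\ref{grams}) applied inside the disk bounded by a carefully chosen separating cycle of length at most six, rather than by carrying a precolored cycle of length up to nine through the induction. What the paper's approach buys is brevity: no extension hypothesis to maintain, no exceptional configurations to enumerate, and no separate reduction from girth four to girth five. What your approach buys is a strictly stronger conclusion (the extension theorem), at the cost of exactly the bookkeeping you flag as the main obstacle. One small point to watch in your derivation of Gr\"otzsch from the extension statement: since the extension theorem carries exceptions, ``pick any face, precolor its boundary, and extend'' is not quite immediate---you must also argue that some precoloring of the chosen face avoids every listed obstruction.
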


This result has been the subject of extensive research.
Thomassen~\cite{ThoGro,ThoShortlist} found two short proofs and
extended the result in many ways. We return to the various extensions
later, but let us discuss algorithmic aspects of Theorem~\ref{grotzsch} first.
It is easy to convert either of Thomassen's proofs into a quadratic-time
algorithm to find a $3$-coloring, but it is not clear how to do so in
linear time.
A serious problem appears very early in the algorithm. Given a facial cycle $C$
of length four, one would like to identify a pair of diagonally opposite
vertices of $C$ and apply recursion to the smaller graph.
It is easy to see that at least one pair of diagonally opposite vertices
on $C$ can be identified without creating a triangle, but how can
we efficiently decide which pair?
If we could test in (amortized)
constant time whether given two vertices are joined
by a path of length at most three, then that would take care of this issue.
This can, in fact, be done, using a data structure of 
Kowalik and Kurowski~\cite{KowKur} {\em provided} the graph does not
change.
In our application, however, we need to repeatedly identify vertices,
and it is not clear how to maintain the data structure of
Kowalik and Kurowski in overall linear time.
Kowalik~\cite{Kow3col} developed a sophisticated enhancement of this
data structure that supports edge addition and deletion in amortized $O(\log n)$ time.
Furthermore, he found a variant of the proof of Gr\"otzsch's theorem
that can be turned into an $O(n\log n)$ algorithm to $3$-color a triangle-free planar
graph on $n$ vertices using this data structure.
We improve this to a linear-time algorithm, as follows.

\begin{theorem}
\label{main}
There is a linear-time algorithm to $3$-color an input triangle-free planar
graph.
\end{theorem}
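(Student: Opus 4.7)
The plan is to bootstrap from Thomassen's inductive proofs of Theorem~\ref{grotzsch}, first converting them into a stronger statement that includes list-coloring restrictions on a distinguished face (so that the induction passes through small configurations with freedom to extend partial colorings), and then extracting from that proof a \emph{finite} list of bounded-size reducible configurations. By a discharging argument exploiting Euler's formula together with the absence of triangles, one shows that every sufficiently large triangle-free plane graph (with appropriate boundary) contains one of these configurations; candidates include vertices of degree at most two, specific degree-three arrangements, separating $4$- or $5$-cycles, and $4$-faces admitting a safe identification of opposite vertices. Reducing each configuration produces a strictly smaller triangle-free planar graph whose $3$-coloring extends in constant time to a $3$-coloring of the original.

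The main obstacle is the $4$-face reduction. Given a facial $4$-cycle $v_1v_2v_3v_4$, identifying $v_1$ with $v_3$ creates a triangle iff $v_1$ and $v_3$ share a neighbor other than $v_2,v_4$; testing this naively costs $\Theta(\deg(v_1)+\deg(v_3))$. To make this local I would equip each vertex with an auxiliary structure recording its second-neighborhood counts — specifically, for each pair of vertices at distance two, the number of length-two connecting paths. Because the graph is planar and triangle-free, the total size of these local records is $O(n)$, and a single reduction changes only a bounded neighborhood, so the amortized cost of keeping them current is constant.

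To obtain linear total time, I would maintain a queue of currently reducible configurations. Whenever a reduction is performed, only the configurations incident to the affected bounded neighborhood need to be re-examined and possibly inserted into or removed from the queue; the planarity-bounded degree sum of affected vertices caps the work per reduction at a constant (amortized). The algorithm then repeatedly dequeues a configuration, reduces, and recurses; when recursion returns, the precomputed extension rule for that configuration reconstructs the coloring in constant time. The base case is a graph of bounded size, colored by brute force using the strengthened list-coloring statement.

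The hard part will be twofold: (i) choosing the reducible configurations so that one always exists and so that each reduction preserves both planarity and triangle-freeness while allowing a constant-time coloring extension, and (ii) verifying that the bookkeeping invariants for the auxiliary second-neighborhood structure can indeed be maintained in amortized $O(1)$ per reduction — in particular, handling the case where a reduction creates new short cycles that would force cascading checks. Controlling this cascade is where the simplification of the proof of Theorem~\ref{grotzsch} promised in the abstract must pay off: the reduction rules should be engineered so that after any reduction, only a constant number of new candidate configurations arise in the vicinity.
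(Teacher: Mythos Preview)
Your overall architecture---find a bounded-size reducible configuration via discharging, reduce, maintain a queue of candidates, and update only a bounded neighborhood after each reduction---matches the paper's. But the central technical device you propose for the $4$-face problem does not work, and the paper solves it by a completely different idea.

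First, a small but telling slip: identifying $v_1$ with $v_3$ creates a triangle precisely when there is a path of length \emph{three} between them (outside the $4$-cycle), not when they share a neighbor; a common neighbor only produces a parallel edge. So the test you need is for length-$3$ paths, and your second-neighborhood structure would not answer it even if it existed. Second, and more seriously, the structure you describe does not have linear size. Take $K_{2,n}$: it is planar and triangle-free, yet the $n$ vertices on one side are pairwise at distance two, giving $\binom{n}{2}$ records. Planarity bounds the number of edges, not the number of distance-two pairs, so ``planar and triangle-free'' does not give you $O(n)$ here. Maintaining such a structure under identifications in amortized $O(1)$ is exactly the difficulty that drove Kowalik to an $O(n\log n)$ algorithm with a nontrivial dynamic data structure; your proposal does not get past it.

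The paper's solution avoids data structures entirely. It strengthens the notion of a reducible configuration to a \emph{secure} one: a tetra-, penta-, hexa-, octa-, or decagram together with the extra requirement that certain nearby vertices have degree below a fixed constant (they use $60$). Security guarantees that the relevant length-$\le 3$ paths, if they exist, lie in a subgraph of bounded size reachable through small-degree vertices, so safety is testable by brute force in constant time. The real work is then a more delicate discharging argument (Lemma~\ref{lem:semigram}) showing that a secure multigram always exists; the need to force bounded degrees is what introduces the two extra configurations (octagram and decagram). After that, a simple candidate list and a lemma bounding how far the effect of an edge insertion/deletion can propagate (Lemma~\ref{adddeledge}) give the linear bound, with no auxiliary neighborhood counts at all.
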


\noindent
To describe the algorithm we 
exhibit a specific list of five reducible configurations, 
called ``multigrams",
and show that every triangle-free planar graph contains one of those
reducible configurations. 
Proving this is the only step that requires some effort; the rest of the
algorithm is entirely straightforward, and the algorithm is very
easy to implement.
Given a triangle-free planar graph $G$
we look for one of the reducible
configurations in $G$, and upon finding one we modify $G$ to a smaller
graph $G'$, and  apply the algorithm recursively to $G'$.
It is easy to see that every $3$-coloring of $G'$ can be
converted to a $3$-coloring of $G$ in constant time.
Furthermore, each reducible configuration has a vertex of degree at most
three, and, conversely, given a vertex of $G$
of degree at most three it can be
checked in constant time whether it belongs to a reducible configuration.
Thus at every step a reducible configuration can be found in 
amortized constant time
by maintaining a list of candidates for such vertices.
As a by-product of the proof of correctness of our algorithm we give
a short proof of Gr\"otzsch's theorem.

Let us briefly survey some of the related work.
Since in a proof of Theorem~\ref{grotzsch} it is easy to eliminate faces
of length four, the heart of the argument lies in proving the theorem
for graphs of girth at least five.
For such graphs there are several extensions of the theorem.
Thomassen proved in~\cite{ThoGro} that every graph of girth at least five
that admits an embedding in the projective plane or the torus is
$3$-colorable, and the analogous result for Klein bottle graphs was
obtained in~\cite{ThoWal}.
For a general surface $\Sigma$, Thomassen~\cite{ThoGirth5} proved the deep theorem
that there are only finitely many $4$-critical graphs of girth at 
least five that embed in $\Sigma$.
(A graph is $4$-critical if it is not $3$-colorable, but every proper
subgraph is.)

None of the results mentioned in the previous paragraph hold
without the additional restriction on girth.
Nevertheless, Gimbel and Thomassen~\cite{GimTho} found an elegant
characterization of $3$-colorability of triangle-free projective-planar
graphs.
That result does not seem to extend to other surfaces, but two of us
in a joint work with Kr\'al'~\cite{DvoKraTho}
 were able to find a sufficient condition for
$3$-colorability of triangle-free graphs drawn on a fixed surface $\Sigma$.
The condition is closely related to the sufficient condition for the existence
of disjoint connecting trees in~\cite{RobSeyGM7}.
Using that condition Dvo\v{r}\'ak, Kr\'al' and Thomas were able to
design a linear-time algorithm to test if a triangle-free graph on
a fixed surface is $3$-colorable~\cite{DvoKraTho}.

If we allow the planar graph $G$ to have triangles, then testing
$3$-colorability becomes NP-hard~\cite{GarJoh}.
There is an interesting conjecture
of Steinberg stating that every planar graph with no cycles of
length four or five is $3$-colorable,
but that is still open.
Every planar graph is $4$-colorable by the 
Four-Color Theorem~\cite{AppHak1, AppHakKoc, RobSanSeyTho4CT},
and a $4$-coloring can be found in quadratic time~\cite{RobSanSeyTho4CT}.
Any improvement to the running time of this algorithm would seem to
require new ideas.
A $5$-coloring of a planar graph can be found in linear 
time~\cite{NisChi}.

Our terminology is standard. 
All {\em graphs} in this paper are simple and 
{\em paths} and {\em cycles} have no repeated vertices.
By a {\em plane graph} we mean a graph that is drawn in the plane.
On several occasions we will be identifying vertices, but when we do,
we will remove the resulting parallel edges.
When this will be done by the algorithm we will make sure that
the only parallel edges that arise will form faces of length two.
The detection and removal of such parallel edges can be done in
constant time.

\section{Short proof of Gr\"otzsch's theorem}
\label{shortproof}

Let $G$ be a plane graph.  Somewhat nonstandardly, we call a cycle $F$ in $G$ {\em facial}
if it bounds a face in a connected component of $G$, regardless of whether $F$ is a face
or not (another component of $G$ might lie in the disk bounded by $F$).  
This technicality makes no difference in this section, because here we may assume
that all graphs are connected. However, it will be needed in the description of
the algorithm, because the graph may become disconnected during the course of
the algorithm, and we cannot afford to decompose it into connected components.

By a {\em tetragram} in $G$ we mean a sequence $(v_1,v_2,v_3,v_4)$
of vertices of $G$ such that they form a facial cycle in $G$ in the order
listed. We define a {\em hexagram} $(v_1,v_2,\ldots,v_6)$ similarly.
By a {\em pentagram} in $G$ we mean a sequence $(v_1,v_2,v_3,v_4,v_5)$
of vertices of $G$ such that they form a facial cycle in $G$ in the
order listed and $v_1,v_2,v_3,v_4$ all have degree exactly three.
We will show that every triangle-free planar graph of minimum degree
at least three has a
tetra-, penta- or hexagram with certain additional properties
that will allow an inductive argument.
But first we need the following lemma.

\begin{lemma}
\label{grams}
Let $G$ be a connected triangle-free plane graph
and let $f_0$ be the unbounded face of $G$.
Assume that the boundary of $f_0$ is
a cycle $C$ of length at most six,
and that every vertex of $G$ not on $C$ has degree
at least three.
If $G\ne C$, 
then $G$ has either a tetragram, or a pentagram $(v_1,v_2,v_3,v_4,v_5)$
such that $v_1,v_2,v_3,v_4\not\in V(C)$.
\end{lemma}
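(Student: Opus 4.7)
The plan is to derive a contradiction by discharging. Suppose $G$ contains neither a tetragram nor a pentagram with $v_1,\dots,v_4\notin V(C)$. Triangle-freeness and the absence of a tetragram force every bounded face to have length at least $5$ and $|C|\in\{5,6\}$ (otherwise $C$ itself is a tetragram). The absence of the pentagram forces every bounded $5$-face to contain at most three vertices that are simultaneously off $V(C)$ and of degree exactly $3$, because any four such vertices on a $5$-cycle necessarily include four consecutive ones.

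Assign initial charges $\mathrm{ch}(v)=d(v)-4$ to each vertex and $\mathrm{ch}(f)=\ell(f)-4$ to each face, so that $\sum\mathrm{ch}=-8$ by Euler's formula. Apply the rule that every bounded face sends $\tfrac{1}{3}$ to each incident vertex that is off $V(C)$ and of degree exactly $3$. Then every off-$C$ degree-$3$ vertex receives $3\cdot\tfrac{1}{3}=1$ and balances to $0$; every off-$C$ vertex of higher degree is already non-negative; every bounded $5$-face sends at most $1$ by the observation; and every bounded face of length $\ell\ge 6$ sends at most $\ell/3\le\ell-4$. All the deficit thus lies on the outer face and the vertices of $C$, giving
\[
\sum_{v\in V(C)}d(v)\le 3|C|-4.
\]
Triangle-freeness and absence of a tetragram also rule out chords of $C$ (a short chord creates a triangle; a long chord, possible only for $|C|=6$, forces $s=0$ via the bound above, but then connectedness gives $G=C\cup\{\text{chord}\}$, whose two inner $4$-faces are tetragrams). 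Hence the number $s$ of edges from $V(C)$ to $V(G)\setminus V(C)$ equals $\sum_{v\in V(C)}(d(v)-2)\le|C|-4\le 2$, and $G\ne C$ with $G$ connected forces $s\ge 1$.

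The hard part will be to rule out the remaining configurations $s\in\{1,2\}$, $|C|\in\{5,6\}$. For $s=1$, the single spoke $v_0 w$ is a cut-edge, so the face of $G$ containing it has boundary walk of length at least $|C|+4$ (it traces around $C$ from $v_0$ back to $v_0$, along the spoke, around the outer walk of the interior component of $G-v_0w$ containing $w$, and back along the spoke). Its final charge, at least $(2|C|-4)/3\ge 2$, plugged into the previous count gives $\sum_{v\in V(C)}d(v)<2|C|$, contradicting $d(v)\ge 2$ for $v\in V(C)$. The case $s=2$, which forces $|C|=6$ and $\sum_{v\in V(C)}d(v)=14$, splits according to whether the two spokes share an endpoint on $C$ and whether they land in the same component of the interior: in each subcase either a short path formed by the two spokes plus an arc of $C$ yields a $4$-face (tetragram) or a $5$-face whose four off-$C$ vertices all have degree exactly $3$ (pentagram of the required form), or else an analogous ``big face'' argument supplies an extra final charge large enough to contradict $\sum_{v\in V(C)}d(v)=14$. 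Orchestrating these $s=2$ subcases cleanly is the main obstacle, but in each of them the pigeonhole observation on $5$-faces or the large-face discharging bound above applies.
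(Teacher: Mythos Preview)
Your discharging approach is genuinely different from the paper's, and considerably more laborious. The paper also discharges, but with a decisive twist: instead of leaving $f_0$ with its natural charge and then tracking the resulting deficit on $V(C)$ (which is what forces you into the $s\in\{1,2\}$ endgame), the paper gives $f_0$ an artificially inflated initial charge of $3|C|+11$ and has $f_0$ pay off the low-degree vertices of $C$ directly (degree-$2$ vertices on $C$ receive $5$ from $f_0$ and $1$ from their other incident face; degree-$3$ vertices on $C$ receive $3$ from $f_0$). One then checks that $f_0$ still ends non-negative. With that single device the proof is one paragraph: every vertex and $f_0$ has non-negative final charge, so some bounded face $f$ is negative; since $f$ sends at most one unit per incident vertex, $|f|\le 5$; and if $|f|=5$ then $f$ sends to at least four incident degree-$3$ vertices off $C$. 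No spokes, no $s$.

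Your reduction to $s\le 2$ is correct, and the $s=1$ cut-edge argument is fine. But the $s=2$ sketch is where the real work lies, and you have not actually done it. When $s=2$, $|C|=6$, and both spokes land in a single interior component from distinct points of $C$, equality $\sum_{v\in C}d(v)=14$ forces every bounded face to have final charge exactly zero and every interior vertex to have degree exactly three, so there is no ``extra final charge'' to harvest; you then need a combinatorial argument. One route: each $C$-edge lies on a bounded face with at least two $C$-vertices, hence on a $5$-face carrying exactly one $C$-edge and two spoke-incidences, giving six such $5$-faces and twelve spoke-incidences against only four available---but this presumes face boundaries are simple cycles, i.e.\ that $G$ is $2$-connected, which is not automatic (the interior may have its own cut-vertices). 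All of this can be pushed through, but not in the sentence you allot it; the paper's trick of routing charge through $f_0$ is exactly what makes the whole $s$-analysis unnecessary.
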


\begin{proof}
We define the charge of a vertex $v$ to be $3\deg(v)-12$,
the charge of the face $f_0$ to be $3|V(C)|+11$
and the charge of a face $f\ne f_0$ of length $\ell$ to be $3\ell-12$.
It follows from Euler's formula that the sum of the charges of all
vertices and faces is $-1$.

We now redistribute the charges according to the following rules.
Every vertex not on $C$ of degree three will receive one unit of charge 
from each incident face, 
each vertex on $C$ of degree three will receive three units from $f_0$,
and each vertex  of degree two on $C$ will receive 
five units from $f_0$ and one unit from the other incident face.
Thus the final charge of every vertex is non-negative. 

We now show that the final charge of $f_0$ is also non-negative.
Let $\ell$ denote the length of $C$.
Then $f_0$ has initial charge of $3\ell+11$.
By hypothesis at least one vertex of $C$ has degree at least three, and hence
$f_0$ sends
a total of at most $5(\ell-1)+3$ units of charge, leaving it at the
end with charge of at least $3\ell+11-5(\ell-1)-3\ge 1$.

Since no charge is lost or created, there is a face $f\ne f_0$ whose final
charge is negative.
Since $f$ sends at most one unit to each incident vertex, we see that 
$f$ has length at most five.
Furthermore, if $f$ has length exactly five, then it
 sends one unit to at least four incident vertices. None of those could
be a degree two vertex on $C$, for then $f$ would not be sending
anything to the  ends of the common subpath of the boundaries of $f$ and $f_0$.
Thus the vertices of $f$ form the desired tetragram or pentagram.
\end{proof}

Let $k=4,5,6$, and let $(v_1,v_2,\ldots,v_k)$ be a tetragram, pentagram
or hexagram in a triangle-free plane graph $G$. 
If $k=4$ or $k=6$, then we say that 
$(v_1,v_2,\ldots,v_k)$ is {\em safe} if every path in $G$ of length at most
three with ends $v_1$ and $v_3$ is a subgraph of 
the cycle $v_1v_2\cdots v_k$.
For $k=5$ we define safety as follows.
For $i=1,2,3,4$ let $x_i$ be the neighbor of $v_i$ distinct from
$v_{i-1}$ and $v_{i+1}$ (where $v_0=v_5$).
Then $x_i\not\in \{v_1,\ldots,v_5\}$, because $G$ is triangle-free.
Assume that

\begin{itemize}
\item the vertices $x_1,x_2,x_3,x_4$ are pairwise distinct
and pairwise non-adjacent, and
\item there is no path in $G\setminus\{v_1,v_2,v_3,v_4\}$
of length at most three from $x_2$ to $v_5$, and
\item every path in $G\setminus\{v_1,v_2,v_3,v_4\}$
of length at most three from $x_3$ to $x_4$ has length exactly two,
and its completion via the path $x_3v_3v_4x_4$ results in a facial cycle
of length five in $G$ (in particular, there is at most one such path).
\end{itemize}

In those circumstances we say that the pentagram $(v_1,v_2,\ldots,v_5)$
is {\em safe}.

\begin{lemma}
\label{safegrams}
Every triangle-free plane graph $G$ of minimum degree at least three
has a safe tetragram, a safe pentagram, or a safe hexagram.
\end{lemma}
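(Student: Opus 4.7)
The plan is to proceed by contradiction, taking $G$ to be a counterexample with $|V(G)|$ minimum (and without loss of generality connected). A short Euler-formula computation shows that $G$ has a face of length at most $5$: combining the triangle-free bound $2|E(G)| \ge 4|F(G)|$ with the min-degree bound $2|E(G)| \ge 3|V(G)|$ and $|V(G)|-|E(G)|+|F(G)|=2$ rules out the possibility that every face has length at least $6$. I would then designate any such short face $f_0$ as the unbounded face; its boundary cycle $C_0$ has length at most $5\le 6$, and every vertex not on $C_0$ has degree at least three by hypothesis, so Lemma~\ref{grams} applies and yields a tetragram or a pentagram $(v_1,\ldots,v_k)$ of $G$ whose vertices $v_1,v_2,v_3,v_4$ lie in the interior, i.e., not on $C_0$. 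If this gram is safe, we contradict the choice of $G$, so we assume it is not.

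The plan from here is to exploit the failure of safety to produce a smaller sub-disk and derive a contradiction. Concretely, among all cycles of length at most $6$ in $G$ that bound a disk containing at least one vertex in its interior, I would choose one, call it $C$, whose bounded disk $D$ has the minimum number of interior vertices; the failure of safety supplies such a short cycle in the first place. Applying Lemma~\ref{grams} to $G[\overline D]$ with the exterior of $C$ as the outer face is legal because interior vertices of $D$ retain all their neighbors (no edge of $G$ crosses $C$), so the min-degree hypothesis is preserved. The lemma produces a tetragram or pentagram whose special vertices are strictly inside $D$; this is automatically a tetragram or pentagram of $G$ itself, because nothing outside $\overline D$ can subdivide a face lying strictly inside $D$.

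The main obstacle will be upgrading safety inside $G[\overline D]$ to safety in all of $G$. A violating short path in $G$ either stays inside $\overline D$ --- immediately producing a strictly smaller sub-disk and contradicting minimality --- or crosses $C$ at least once, in which case concatenating an endpiece of the path with a subpath of $C$ produces a new cycle of length at most $6$ that bounds a disk strictly smaller than $D$, again contradicting minimality. For the pentagram case I would handle the three safety conditions separately: non-distinctness or adjacency among the external neighbors $x_1,\ldots,x_4$ yields a short closed walk that I would repackage as a hexagram (which is where hexagrams enter the conclusion of the lemma); a short path from $x_2$ to $v_5$ produces a cycle of length at most $5$; and a bad path from $x_3$ to $x_4$ produces a cycle of length $5$ or $6$. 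In every failure mode I must verify that the resulting short cycle genuinely bounds a disk strictly smaller than $D$, and this careful planar bookkeeping is the real technical content of the proof.
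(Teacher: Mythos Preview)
Your broad strategy---pass to a minimal short-cycle sub-disk, invoke Lemma~\ref{grams} there, and then argue that any failure of safety produces a strictly smaller sub-disk---is close to the paper's, but as written it has a real gap and one misdiagnosis.

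\textbf{The gap.} You take a single minimal cycle $C$ of length at most six. The paper instead nests \emph{two} minimal choices: first a cycle $C_1$ of length at most five (separating if one exists, else a short face), and only then, inside the resulting $G_1$, a minimal separating six-cycle $C_2$. The reason this matters is your ``crossing'' case. When the violating path $P$ leaves $\overline D$, you propose to splice an endpiece of $P\cup Q$ with an arc of $C$. But $P\cup Q$ contributes a subpath of length four between the two $C$-vertices it hits, and if $|C|=6$ the shorter arc of $C$ between those vertices can have length three; you get a cycle of length seven, not at most six, and minimality of $C$ gives you nothing. The paper's two-level choice is exactly what rescues this: if the stray vertex $w$ lies outside $G_1$, then $|C_1|\le5$ forces $w_1,w_4$ to have a common neighbour on $C_1$ that can replace $w$; once $w\in V(G_1)$, a careful comparison against the minimality of $C_2$ (and of $C_1$) finishes the argument. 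Your single minimal $C$ does not provide this leverage.

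\textbf{The misdiagnosis.} You say hexagrams arise when the $x_i$ are non-distinct or adjacent. In the paper's setup those pathologies are already excluded by the minimality of $C_1$ (no $4$-cycles except possibly $C_1$ itself, no separating $\le5$-cycles in $G_1$), so the $x_i$ are automatically distinct and pairwise non-adjacent. The hexagram actually appears when the $x_3$--$x_4$ violating path $P$ has length three and $P\cup Q$ \emph{bounds a face}: then $(x_4,v_4,v_3,x_3,a,b)$ is the safe hexagram, and one must check separately that no short $x_4$--$v_3$ path spoils it. Your ``smaller sub-disk'' argument silently assumes the cycle $P\cup Q$ has an interior vertex; when it bounds a face it does not, and that is precisely the hexagram branch.

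Two smaller points. First, the paper dispatches tetragrams in one line before any of this: if $G$ has a $4$-face $(v_1,v_2,v_3,v_4)$, then by planarity and triangle-freeness one of $(v_1,v_2,v_3,v_4)$, $(v_2,v_3,v_4,v_1)$ is already safe (two crossing short paths would intersect). So the whole sub-disk machinery is only needed for pentagrams. Second, your opening ``$|V(G)|$ minimal counterexample'' hypothesis is never invoked; drop it.
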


\begin{proof}
Let $G$ be as stated.  If $(v_1,v_2,v_3,v_4)$ is a tetragram in $G$,
then one of the tetragrams $(v_1,v_2,v_3,v_4)$, $(v_2,v_3,v_4,v_1)$
is safe, as $G$ is planar and triangle-free.
Thus we may assume that $G$ has no $4$-faces, and hence every $4$-cycle in $G$
is separating.

Let us define an induced subgraph $G_1$ of $G$
and a facial cycle $C_1$ of $G_1$ in the following way:
If $G$ has a separating cycle of length at most five, then let us select
such a cycle $C_1$ so that the disk it bounds is as small as possible,
and let $G_1$ be the subgraph of $G$ consisting of all vertices and
edges drawn in the closed disk bounded by $C_1$.
If $G$ has no separating cycle of length at most five, then let $G_1:=G$
and let $C_1$ be a facial cycle of $G$ of length at most five.
Such a facial cycle exists, because
the minimum degree of $G$ is at least three.
In the latter case, we also redraw $G$ so that $C_1$ becomes the outer face;
thus $G_1$ is always drawn in the closed disk bounded by $C_1$.
Note that $G_1$ does not contain any separating cycle of length
at most five, and thus $G_1$ does not contain any $4$-cycle except
possibly $C_1$.

Next, we define a subgraph $G_2$ of $G_1$ and its facial cycle
$C_2$ as follows.  If $G_1$ contains a separating cycle of length six,
then choose such a cycle $C_2$ so that the disk it bounds contains as few vertices as possible,
and let $G_2$ be the subgraph of $G_1$ consisting of all vertices and
edges drawn in the closed disk bounded by $C_2$.  Otherwise, let $G_2:=G_1$
and $C_2:=C_1$.  Note that $G_2$ does not contain any separating
cycle of length at most six.  As $G$ has no $4$-faces, it follows that
any cycle of length at most six in $G_2$ bounds a face.

The cycle $C_2$ is induced in $G$,  
for if it had a chord, then the chord would belong to $G_1$ (because $G_1$ is an
induced subgraph of $G$), and hence $V(C_2)$ would include the vertex-sets 
of two distinct cycles of length at most (and hence exactly) four in $G_1$, a contradiction.

From Lemma~\ref{grams} applied to the graph $G_2$ and facial cycle $C_2$
we deduce that $G_2$ has a pentagram $(v_1,v_2,v_3,v_4,v_5)$
such that $v_1,v_2,v_3,v_4\not\in V(C_2)$.
We may assume that neither this pentagram nor the pentagram $(v_4,v_3,v_2,v_1,v_5)$
is safe in $G$, for otherwise the lemma holds.
Let $x_i$ be the neighbor of $v_i$ outside of the pentagram, for $1\le i\le 4$.
Note that all of these neighbors belong to $G_2$, and
as $G_2$ is triangle-free and contains no $4$-cycles other than $C_2$ 
 and no separating cycles of length at most $5$,
they are distinct and mutually non-adjacent.  It follows that $|\{x_1,x_2,x_3,x_4\}\cap V(C_2)|\le 3$,
and by symmetry we may assume that at least one of $x_3$ and $x_4$ does not lie on $C_2$.
Furthermore, as each cycle of length at most six in $G_2$ is facial, if $v_5\in V(C_2)$, then
$\{x_1,x_2,x_3,x_4\}\cap V(C_2)=\emptyset$.


Since the pentagram $(v_1,v_2,v_3,v_4,v_5)$ is not safe in $G$, there
exists a pair of vertices $x,y$ such that either $\{x,y\}=\{x_2,v_5\}$
or $\{x,y\}=\{x_3,x_4\}$, and there exists a path $P$ in 
$G\backslash\{v_1,v_2,v_3,v_4\}$ with ends $x$ and $y$ such that $P$
has length at most three, and if  $\{x,y\}=\{x_3,x_4\}$, then either
$P$ has length exactly three, or its completion via the path
$x_3v_3v_4x_4$ does not result in a facial cycle in $G$.
If $\{x,y\}=\{x_2,v_5\}$ then let $Q$ denote the path $x_2v_2v_1v_5$;
otherwise let $Q$ denote the path $x_3v_3v_4x_4$.
Suppose first that $P\cup Q$ bounds a face in $G$.
Then it follows that $\{x,y\}=\{x_3,x_4\}$, and hence $P$ has
length exactly three. Let the vertices of $P\cup Q$ be $x_3v_3v_4x_4ab$ 
in order.
Let us argue that $(x_4,v_4,v_3,x_3,a,b)$ is a safe hexagram.
If that were not the case, then there would exist a path
$x_4u_1v_3$ or $x_4u_1u_2v_3$ for some $u_1,u_2\neq v_4$.  Since $v_2$ and $v_3$
have degree three and the vertices $x_1$, $x_2$, $x_3$ and $x_4$ are distinct
and mutually non-adjacent, the former case is not possible, and in the latter
case $u_2=x_3$.  However, since at most one of $x_3$ and $x_4$ lies on $C_2$,
$x_4u_1x_3v_3v_4$ would be a separating $5$-cycle in $G_2$, and hence in $G_1$,
a contradiction.


Thus we may assume that $P\cup Q$ does not bound a face in $G$, and so 
$P\cup Q$ is a separating cycle in $G$.
It follows from the choice of $C_2$ that $P\cup Q$ is not a subgraph of $G_2$.
But not both $x,y$ belong to $C_2$ and $C_2$ is induced; 
thus a subpath $R$ of $P\cup Q$ of length four joins two vertices $w_1,w_4$
of $C_2$, and a vertex $w$ of $(P\cup Q)\backslash V(G_2)$ is adjacent to
both $w_1$ and $w_4$.
If $w\not\in V(G_1)$, then $w_1,w_4\in V(C_1)$, because they belong to $C_2$.
But $C_1$ has length at most five, and $w_1,w_4$ are not adjacent, because
$G$ is triangle-free.
Thus $w_1,w_4$ have a common neighbor in $C_1$, and this neighbor can
replace $w$. Thus we may assume that $w\in V(G_1)$.

If $w_1$ and $v_4$ have a common neighbor in $C_2$, then $R$ can be completed
using this neighbor to a cycle that contradicts the choice of $C_2$.
It follows that $w_1,w_4$ are at distance three on $C_2$, and so we
may assume that the vertices of $C_2$ are $w_1,w_2,\ldots,w_6$, in order.
From the symmetry we may assume that $w_1w_2w_3w_4w$ bounds a face,
by the minimality of $C_1$.
Thus the closed disk bounded by $P\cup Q$ does not include $w_5,w_6$,
and it includes no vertex of $V(G)-V(G_2)$, except $w$.
Thus $P\cup Q$ contradicts the choice of $C_2$.
\end{proof}

\begin{proof}[Proof of Theorem~\ref{grotzsch}.]
Let $G$ be a triangle-free plane graph.
We proceed by induction on $|V(G)|$.
We may assume that every vertex $v$ of $G$ has degree at least three,
for otherwise the theorem follows by induction applied to $G\setminus v$.
By Lemma~\ref{safegrams} there is a safe tetra-, penta-, or hexagram
$(v_1,v_2,\ldots,v_k)$. If $k=4$ or $k=6$, then we apply induction
to the graph obtained from $G$ by identifying $v_1$ and $v_3$.
It follows from the definition of safety that the new graph has
no triangles, and clearly every $3$-coloring of the new graph extends to
a $3$-coloring of $G$.

Thus we may assume that $(v_1,v_2,\ldots,v_5)$ is a safe pentagram in $G$.
Let $G'$ be obtained from $G\setminus\{v_1,v_2,v_3,v_4\}$ by identifying
$v_5$ with $x_2$, and $x_3$ with $x_4$.
It follows from the definition of safety that $G'$ is triangle-free,
and hence it is $3$-colorable by the induction hypothesis.
Any $3$-coloring of $G'$ can be extended to a $3$-coloring of $G$:
let $c_1$ be the color of $x_1$, $c_2$ the color of $x_2$ and $v_5$,
and $c_3$ the color of $x_3$ and $x_4$.  If $c_1=c_2$, then we color the
vertices $v_4$, $v_3$, $v_2$ and $v_1$ in this order.  Note that when
$v_i$ ($i=1,2,3,4$) is colored, it is adjacent to vertices of at most
two different colors, and hence we can choose the third color for it.
Similarly, if $c_2=c_3$, then we color the vertices in the following order:
$v_1$, $v_2$, $v_3$ and $v_4$.
Let us now consider the case that $c_1\neq c_2\neq c_3$.  We color $v_2$ with
$c_1$, $v_3$ with $c_2$, and choose a color different from $c_1$ and $c_2$
for $v_1$ and a color different from $c_2$ and $c_3$ for $v_4$.
Thus $G$ is $3$-colorable, as desired.
\end{proof}

Let us note that the essential ideas of the proof came
from Thomassen's work~\cite{ThoGro}.
For graphs of girth at least five Thomassen actually proves a stronger statement, 
namely that
every $3$-coloring of an induced facial cycle of length at most nine
extends to a $3$-coloring of the entire triangle-free plane graph,
unless some vertex of $G$ has three distinct neighbors on $C$
(and those neighbors received three different colors).
By restricting ourselves to Theorem~\ref{grotzsch} we were able to
somewhat streamline the argument.
Another variation of the same technique is presented in~\cite{Kow3col}.

\section{Graph representation}
\label{sec-repr}

For the purpose of our algorithm, graphs will be represented by means of
doubly linked adjacency lists.  More precisely, the neighbors of each
vertex $v$ will be listed in the clockwise cyclic order in which they
appear around $v$, and the two occurrences of the same edge will be
linked to each other. The facial walks of the graph can be read off from this
representation using the standard face tracing algorithm
(Mohar and Thomassen~\cite{mohthom}, page 93).  Thus all vertices and edges
incident with a facial cycle of length $k$ can be listed in time $O(k)$.
Here we make use of our non-standard definition of facial cycle.

Suppose that $D$ is a fixed constant (in our algorithm, $D=59$).
We can perform the following operations with graphs represented in the described way
in constant time:

\begin{itemize}
\item remove an edge when a corresponding entry of the adjacency list is given
\item add an edge with ends $u$, $v$ into a face $f$, assuming that the edges 
preceding and following $u,v$ in the facial boundary of $f$
are specified
\item remove an isolated vertex
\item determine the degree of a vertex $v$ if $\deg(v)\le D$, or prove that $\deg(v)>D$
\item check whether two vertices $u$ and $v$ such that $\min(\deg(u),\deg(v))\le D$ are adjacent
\item check whether the distance between two vertices $u$ and $v$ such that $\max(\deg(u),\deg(v))\le D$
is at most two
\item given an edge $e$ incident with  a face $f$, output all vertices 
whose distance from $e$ in the facial walk
of $f$ is at most two, and determine whether the length of the component of the boundary of $f$
that contains $e$ has length at most $6$
\item output the subgraph consisting of vertices reachable from a vertex $v_0$ through a path $v_0$, $v_1$, \ldots, $v_t$
of length $t\le D$, such that $\deg(v_i)\le D$ for $0\le i<t$ (but the degree of $v_t$ may be arbitrary).
\end{itemize}

All the transformations and queries executed in the algorithm can be expressed
in terms of these simple operations.

\section{The algorithm}
\label{sec-alg}

The idea of our algorithm is to find a safe 
tetragram, pentagram or hexagram $\gamma$ in $G$
and use it to reduce the size of the graph as in the proof of 
Theorem~\ref{grotzsch} above.
Finding $\gamma$ is easy, but the difficulty lies in testing safety.
To resolve this problem we prove a variant of Lemma~\ref{safegrams}
that will guarantee the existence of such $\gamma$ with an additional
property that will allow testing safety in constant time.
The additional property, called security,
 is merely that enough vertices in and around
$\gamma$ have bounded degree.
Unfortunately, the additional property we require necessitates the
introduction of two more configurations, a variation of  tetragram
called ``octagram'' and a variation of pentagram
called  ``decagram''. For the sake of consistency, we say that
a {\em monogram} in a graph $G$ is the
one-vertex sequence $(v)$ comprised of a vertex $v\in V(G)$ of degree 
at most two.

Now let $G$ be a plane graph, let $k\in\{1,4,5,6\}$ and let
$\gamma=(v_1,v_2,\ldots,v_k)$ be a mono-, tetra-, penta-, or hexagram in $G$.
Let $C$ be a subgraph of $G$.
(For the purpose of this section the reader may assume that $C$ is
the null graph, but in the next section we will need $C$ to be a
facial cycle of $G$.)
A vertex of $G$ is {\em big} if it has degree at least $60$,
and {\em small} otherwise.
A vertex $v\in V(G)$ is {\em $C$-admissible} if it is small and does
not belong to $C$; otherwise it is {\em $C$-forbidden}.
A pentagram $(v_1,v_2,\ldots,v_5)$ is called a {\em decagram}
if $v_5$ has degree exactly three (and hence $v_1,\ldots,v_5$ all
have degree three).  A tetragram is called an {\em octagram}
if all its vertices have degree exactly three.
A {\em multigram} is a monogram, tetragram, pentagram, hexagram, octagram or a decagram.
The vertex $v_1$ will be called the {\em pivot} of the multigram
$(v_1,v_2,\ldots,v_k)$.
In the following  $\gamma$ will be a multigram,
and we will define (or recall) what it means for
$\gamma$ to be safe and $C$-secure. 
We will also define a smaller graph $G'$, which will be called
the {\em $\gamma$-reduction of $G$}.

If $\gamma$ is a monogram, then we define it to be always {\em safe},
and we say that it is {\em $C$-secure} if $v_1\not\in V(C)$.
We define $G':=G\setminus v_1$.

Now let $\gamma$ be a tetragram.  Let us recall that $\gamma$ is
safe if the only paths in $G$ of length at most three 
with ends $v_1$ and $v_3$ are subgraphs of the facial cycle $v_1v_2v_3v_4$.
We say that $\gamma$ is {\em $C$-secure} if
\begin{itemize}
\item it is safe, and
\item $v_1$ is $C$-admissible and has degree exactly three, and
\item letting $x$ denote the neighbor of $v_1$ other than $v_2$ and $v_4$,
the vertex $x$ is $C$-admissible, and 
\item either 
\begin{itemize}
\item $v_3$ is $C$-admissible, or
\item every neighbor $w$ of $x$ is $C$-admissible or
belongs to a $4$-face incident with the edge $v_1x$ (either $v_1v_2wx$ or $v_1v_4wx$).
\end{itemize}
\end{itemize}
We define $G'$ to be the graph obtained from $G$ by identifying the
vertices $v_1$ and $v_3$ and deleting one edge from each of the two 
pairs of parallel edges that result.

If $\gamma$ is an octagram, then it is always {\em safe},
and it is {\em $C$-secure} if $v_1,v_2,v_3,v_4$ are all  are $C$-admissible.
We define $G':=G\setminus \{v_1,v_2,v_3,v_4\}$.

Now let $\gamma$ be a decagram, and for $i=1,2,3,4$ let $x_i$ be the
neighbor of $v_i$ other than $v_{i-1}$ or $v_{i+1}$, where $v_0$
means $v_5$.
We say that the decagram $\gamma$ is {\em safe} if $x_1,x_3$ are distinct,
non-adjacent and there is no path of length two between them.
We say that $\gamma$ is {\em $C$-secure} if it is safe
and the vertices $v_1,v_2,\ldots,v_5,x_1,x_3$ are all $C$-admissible.
We define $G'$ to be the graph obtained from $G\setminus\{v_1,v_2,\ldots,v_5\}$
by adding the edge $x_1x_3$.

Now let $\gamma$ be a pentagram, and for $i=1,2,3,4$ let $x_i$ be 
as in the previous paragraph.
Let us recall that the safety of $\gamma$ was defined prior to
Lemma~\ref{safegrams}.
We say that $\gamma$ is {\em $C$-secure} if it is safe, the vertices
$v_1,v_2,\ldots,v_5,x_1,x_2,x_3,x_4$ are all $C$-admissible,
either $v_5$ or $x_2$ has no $C$-forbidden neighbor,
and either $x_3$ or $x_4$ has no $C$-forbidden neighbor.
We define $G'$ as in the proof of Theorem~\ref{grotzsch}:
$G'$ is obtained from $G\setminus\{v_1,v_2,v_3,v_4\}$ by identifying
$x_2$ and $v_5$;  identifying $x_3$ and $x_4$; and deleting one of the
parallel edges should $x_3$ and $x_4$ have a common neighbor.

Finally, let $\gamma$ be a hexagram.
Let us recall that $\gamma$ is safe if every path of length at most three
in $G$ between $v_1$ and $v_3$ is the path $v_1v_2v_3$.
We say that $\gamma$ is {\em $C$-secure} if $v_1,v_3,v_6$ are $C$-admissible, 
$v_1$ has degree exactly three, and the neighbor of $v_1$ other than $v_2$ or
$v_6$ is $C$-admissible.
We define $G'$ to be the graph obtained from $G$ by identifying the
vertices $v_1$ and $v_3$ and deleting one of the parallel edges that result.

We say that a multigram $\gamma$ is {\em secure} if it is $K_0$-secure, where
$K_0$ denotes the null graph.
This completes the definition of safe and secure multigrams.

\begin{lemma}
\label{coloring}
Let $G$ be a triangle-free plane graph,  let $\gamma$ be a safe
multigram in $G$, and let $G'$ be the $\gamma$-reduction of $G$.
Then $G'$ is triangle-free, and
every $3$-coloring of $G'$ can be converted to a $3$-coloring of
$G$ in constant time.
Moreover, if $\gamma$ is secure, then
$G'$ can be regarded as having been obtained from $G$ by deleting
at most $126$ edges, adding at most $116$ edges, and deleting at least
one isolated vertex.
\end{lemma}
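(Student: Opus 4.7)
My plan is to proceed by case analysis on the type of $\gamma$ (monogram, tetragram, hexagram, octagram, pentagram, or decagram), verifying in each case: (i) that $G'$ is triangle-free; (ii) that any $3$-coloring of $G'$ extends to $G$ in constant time; and (iii) the edge-count bound under the security hypothesis. For the tetragram, pentagram, and hexagram, both (i) and (ii) are exactly what was verified in the proof of Theorem~\ref{grotzsch}, so those arguments transfer verbatim, leaving only the three new multigram types to handle for (i) and (ii).

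For the monogram $\gamma=(v_1)$, $G' = G\setminus v_1$ is trivially triangle-free, and since $v_1$ has degree at most two, any $3$-coloring extends by choosing an unused color. For the octagram, $G' = G\setminus\{v_1,\ldots,v_4\}$ is again trivially triangle-free; each $v_i$ has degree exactly three, so extending a coloring amounts to $2$-list-coloring the $4$-cycle $v_1v_2v_3v_4$, which is always possible (color $v_1$ and $v_3$ with a common element of $L_1\cap L_3$, which is nonempty since two distinct $2$-subsets of $\{1,2,3\}$ must meet, and then color $v_2$ and $v_4$ greedily). For the decagram, the safety hypothesis ($x_1,x_3$ distinct, non-adjacent, and without a common neighbor) ensures that the edge $x_1x_3$ added to $G\setminus\{v_1,\ldots,v_5\}$ creates no triangle; to extend a coloring of $G'$, the edge $x_1x_3$ gives $c(x_1)\neq c(x_3)$, so the resulting list-coloring problem on $C_5$ has $L_1\neq L_3$, which a short case analysis (trying the various choices of $c(v_1)\in L_1$ and $c(v_3)\in L_3$) shows is always solvable.

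For (iii), I describe the transformation $G\to G'$ explicitly as a sequence of edge deletions, edge additions, and isolated-vertex deletions. The key observation is that with $C=K_0$ every vertex named in the security definition is admissible, hence small, hence of degree at most $59$. The monogram, tetragram, hexagram, octagram, and decagram cases yield counts of at most $(2,0)$, $(3,1)$, $(3,2)$, $(8,0)$, and $(10,1)$ edges respectively, all well within $(126,116)$. The tight case is the pentagram: first delete the nine edges incident to $\{v_1,v_2,v_3,v_4\}$ (each of degree exactly three by definition of a pentagram); next simulate the identification of $v_5$ with $x_2$ by deleting the $\deg(v_5)-2$ remaining edges at $v_5$ and adding the corresponding edges at $x_2$ (safety forbids a length-two $x_2$--$v_5$ path, so no parallel edges arise); finally simulate the identification of $x_3$ with $x_4$ via $\deg(x_4)-1$ deletions and at most that many additions (safety permits at most one parallel edge, which is discarded). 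Using $\deg(v_5),\deg(x_4)\le 59$ gives at most $9+57+58=124$ deletions and $57+58=115$ additions, within the stated bounds, and we delete at least one isolated vertex in every case.

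The main obstacle, such as it is, lies in the careful bookkeeping of the pentagram edge count (which is the one place where the constants $126$ and $116$ are nearly saturated); every other piece is either already established by the proof of Theorem~\ref{grotzsch} or reduces to a short list-coloring argument.
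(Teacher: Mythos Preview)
Your proposal is correct and follows essentially the same approach as the paper: case analysis on the multigram type, using safety to guarantee triangle-freeness, the argument from the proof of Theorem~\ref{grotzsch} (plus short list-coloring arguments for the new octagram and decagram cases) to extend a $3$-coloring, and smallness of the relevant vertices to bound the number of edge operations. The paper's own proof is far more terse---it simply notes that in each identification one of the two vertices is small, so at most $59$ edges are deleted and $59$ added, and then defers the explicit constants to ``a more careful examination of the construction of $G'$''---whereas you have actually carried out that examination; your pentagram count of at most $124$ deletions and $115$ additions is indeed within the stated $126$ and $116$.
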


\begin{proof}
The graph $G'$ is triangle-free, because $\gamma$ is safe.
As in the proof of Theorem~\ref{grotzsch}, we argue that
every $3$-coloring of $G'$ can be extended to a $3$-coloring of $G$.
If $\gamma$ is secure, then
every time vertices $u$ and $v$ are identified in the construction of $G'$,
one of $u$, $v$ is small.
Thus the identification of $u$ and $v$ can be seen as a deletion
of at most $59$ edges and addition of at most $59$ edges.
The lemma follows by a more careful examination of the construction 
of $G'$.
\end{proof}

Let $G$ and $C$ be as above.  We say that two small vertices $u,v\in V(G)$
are {\em close} if either there is a path of length at most four between $u$ and $v$
consisting of small vertices, or a facial cycle of length at most six contains
both $u$ and $v$. A vertex $u$ is close to an edge $e$ if
both $u$ and $e$ belong to the facial walk of the same face and the distance
between $u$ and and one end of $e$ in this facial walk is at most two.
Thus for every vertex $v$ there are at most $1+4\cdot 59+59^2+59^3+59^4$
vertices that are close to $v$, and for every edge $e$, there are at most
$10$ vertices that are close to $e$.

\begin{lemma}
\label{findgram}
Given  a triangle-free plane graph $G$ and a vertex $v\in V(G)$,
it can be decided in  constant time whether $G$ has a secure multigram
with pivot $v$.
\end{lemma}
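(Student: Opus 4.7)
The plan is to check directly the five definitions of $K_0$-secure multigram in constant time. Every definition requires the pivot $v_1$ to be $C$-admissible, hence small, so I first test $\deg(v)$: if $\deg(v) > 59$, return ``no''; if $\deg(v) \le 2$, the monogram $(v)$ is secure and I return ``yes''. Otherwise $3 \le \deg(v) \le 59$, and any tetra-, octa-, penta-, deca-, or hexagram with pivot $v$ is supported on a facial cycle of length $4$, $5$, or $6$ through $v$. Using the face-tracing operations of Section~\ref{sec-repr}, I enumerate all such short facial cycles incident with $v$ (at most $\deg(v) = O(1)$ of them) together with the at most two orientations that place $v$ in the pivot position, obtaining a constant number of candidate multigrams.

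For each candidate I verify the type-defining degree conditions and then security. Both reduce to $O(1)$ degree and adjacency queries among vertices lying on, or one step away from, the short facial cycle; the security hypotheses guarantee that each queried vertex is small, so the operations of Section~\ref{sec-repr} run in constant time. The extra tetragram clause about neighbors of $x$ lying on a $4$-face incident with the edge $v_1 x$ is handled by tracing the two faces around that edge, which is constant time since $x$ is small.

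The main obstacle is testing safety in constant time. Here triangle-freeness does most of the work: any two adjacent vertices share no neighbor, so in a tetragram every candidate length-three path of the form $v_1 v_2 b v_3$ or $v_1 v_4 b v_3$ is immediately ruled out, leaving only paths from $v_1$ to $v_3$ through the third neighbor $x$. An analogous reduction handles hexagrams and decagrams, and octagrams are safe by definition. The delicate point is when safety requires us to test adjacencies involving a vertex that may be large. In the tetragram case this vertex is $v_3$, and the fallback security clause guarantees that every neighbor of $x$ is either small (so we scan its neighborhood directly) or lies on a $4$-face incident with $v_1 x$, in which case it is adjacent to $v_2$ or $v_4$ and hence, by triangle-freeness, not adjacent to $v_3$. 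In the pentagram case, the clauses ``either $v_5$ or $x_2$ has no forbidden neighbor'' and ``either $x_3$ or $x_4$ has no forbidden neighbor'' guarantee that at least one endpoint of each path to be excluded has bounded degree, and enumerating that endpoint's neighborhood together with $O(1)$ adjacency tests completes the verification. Summing $O(1)$ work over $O(1)$ candidates yields the constant-time decision procedure.
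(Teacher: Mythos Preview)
Your proof is correct and follows essentially the same approach as the paper's: enumerate the bounded number of candidate multigrams supported on short facial cycles through $v$, verify the degree/admissibility clauses of security first, and then reduce safety to bounded-length path checks in which at most one vertex can be big --- handling the delicate tetragram case (big $v_3$ together with a big neighbor $w$ of $x$) exactly as the paper does, via the $4$-face clause and triangle-freeness. One minor slip in your pentagram paragraph: you say the ``no forbidden neighbor'' clauses guarantee that one \emph{endpoint} has bounded degree, but all four endpoints $x_2,v_5,x_3,x_4$ are already small by admissibility; what those clauses actually buy you (and what your procedure implicitly uses) is that one endpoint has all \emph{neighbors} small, so that the two-step enumeration from that side visits only small intermediate vertices and the final adjacency test against the other (small) endpoint is constant time.
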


\begin{proof} 
This follows by inspecting the subgraph of $G$ induced by vertices and edges
that are close to $v$
and testing the security of all multigrams with pivot $v$ that lie in this subgraph.
Given such multigram, the only non-trivial part of testing security is testing
safety. Thus we may assume that the multigram satisfies all conditions in the
definition of security, except safety.
To test safety we need to check the existence of certain paths $P$ of bounded
length with prescribed ends.
We claim that whenever such a test is needed every vertex of $P$, 
except possibly one, is small.
The claim follows easily, except in
the case of a tetragram $vv_2v_3v_4$, where $v$ has degree three, the vertex $v_3$ is big,
and letting $x$ denote the neighbor of $v_1$ other than $v_2$ and $v_4$,
$x$ is small, but has a big neighbor $w$.  
In this case the straightforward check whether $w$ and $x_3$ are adjacent would take more
than constant time, but it actually follows that $w$ and $x_3$ are not adjacent:
the vertex $w$ belongs to a $4$-face incident with the edge $vx$, for otherwise
the tetragram is not secure; but then it follows  that $w$ and $x_3$ are not adjacent, for
otherwise $wv_3v_2$ would be a triangle. This proves our claim that
in the course of testing safety it suffices to examine paths with all but one
vertex small.

It follows from the claim that security can be tested in constant time, as desired.
\end{proof}

\begin{lemma}
\label{adddeledge}
Let $G$ and $G'$ be triangle-free plane graphs,  
such that for some pair of non-adjacent vertices $u,v\in V(G)$ the graph
$G'$ is obtained from $G$ by adding the edge $uv$.
Let $\gamma$ be a secure multigram in exactly one of the graphs $G,G'$.
Then the pivot of $\gamma$ is close to $u$ or $v$ in $G$,
or to the edge $uv$ in $G'$.
\end{lemma}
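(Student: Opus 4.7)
The plan is to prove the contrapositive: assuming the pivot $p$ of $\gamma$ is not close to $u$ or $v$ in $G$ and not close to the edge $uv$ in $G'$, I would show that $\gamma$ is secure in $G$ if and only if it is secure in $G'$. The main principle, already implicit in Lemma~\ref{findgram}, is that whether a multigram with pivot $p$ is secure depends only on a bounded amount of local information around $p$: the degrees and admissibility of the vertices $v_1,\ldots,v_k$ of $\gamma$ together with the designated nearby vertices ($x$, $x_i$, $w$ in the various definitions), the lengths of the few facial cycles through $p$ that are invoked, and the existence of certain paths of length at most four between specified vertices; moreover, in every safety test, it suffices to consider paths all of whose internal vertices, except possibly one, are small.

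Passing from $G$ to $G'$ affects the graph only by (i) increasing $\deg(u)$ and $\deg(v)$ by one each, (ii) splitting one face of $G$ into two faces of $G'$ sharing the edge $uv$, and (iii) creating new short walks through $uv$. For each multigram type I would verify that if any one of (i)--(iii) changes a condition in the definition of security, then $p$ is close to $u$, $v$, or the edge $uv$. For (i), the affected vertex must lie among the $O(1)$ designated vertices in the local neighborhood of $p$, all at small-vertex distance at most two from $p$, hence close to $p$. For (ii), the newly created facial cycle relevant to a check (such as a $4$-face in the tetragram case, or a facial $5$-cycle in the pentagram case) contains $uv$, and the vertex on it relevant to the test lies within distance two of an end of $uv$ on the facial walk, which is precisely the condition for being close to $uv$ in $G'$. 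For (iii), any new path of length at most four used in a safety test traverses $uv$; splicing its subpath from $\{u,v\}$ to the path's prescribed end with a short small-vertex path from that end back to $p$ yields a small-vertex path of length at most four from $p$ to $u$ or $v$ in $G$.

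The main technical obstacle is the tetragram case with a big neighbor $w$ of the small vertex $x$, where the security definition appears to require testing adjacency between $w$ and $x_3$. As observed in the proof of Lemma~\ref{findgram}, this adjacency is already precluded in $G$ by triangle-freeness (using the $4$-face containing $w$ forced by security), so (iii) can create this adjacency in $G'$ only if $uv = wx_3$, and then $v_1xw$ is a small-vertex path of length two in $G$ from $p$ to $u$ (or $v$), placing $p$ close to $u$ (or $v$) in $G$. Checking the remaining multigram types by going through their definitions of safety and security is routine and analogous in spirit, and combining all cases yields the desired implication.
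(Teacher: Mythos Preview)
Your approach is the same as the paper's---both argue by tracing any change in security status to a local change near the pivot and then back to $u$, $v$, or the edge $uv$---but your sketch has a real gap in step (iii). For the spliced path to certify that $p$ is \emph{close} to $u$ (or $v$), every vertex on it, including $u$ itself, must be small in $G$; you assert this without justification. In the pentagram case, a new safety-violating path of length three between $x_3$ and $x_4$ (with $u,v\notin\{x_1,\dots,x_4\}\cup V(F)$) forces $u$ and $v$ to be precisely the two internal vertices, each adjacent to one of $x_3,x_4$. The reason one of them is guaranteed small is the security clause ``either $x_3$ or $x_4$ has no $C$-forbidden neighbor'': whichever of $x_3,x_4$ has only small neighbors forces its adjacent member of $\{u,v\}$ to be small. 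The paper's proof invokes exactly this clause (and the analogous one for $v_5,x_2$), and without it the closeness conclusion does not follow. The facial-$5$-cycle subcase in the pentagram safety definition needs the same clause to show that the common neighbor $y$ of $x_3$ and $x_4$ is small when $u$ or $v$ equals $y$.

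There is also a minor inaccuracy in (i): for pentagrams the designated vertices whose degree matters include neighbors of $x_2,x_3,x_4,v_5$, which sit at small-vertex distance up to four from $p=v_1$ (e.g.\ via $v_1v_5v_4x_4w$), not two. This does not break the argument, since four is exactly the bound in the definition of ``close'', but it shows that the length accounting in your splicing step is tight and cannot absorb any slack from an unjustified extra hop. The overall plan is sound; what is missing is the recognition that the ``no forbidden neighbor'' clauses in the security definition are precisely what make the closeness conclusion go through.
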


\begin{proof}
Let $v_1$ be the pivot of $\gamma$.  The claim is obvious if $v_1\in\{u,v\}$, and
thus assume this is not the case.  In particular, $\gamma$ is not a monogram or an octagram,
and $\gamma$ corresponds to a facial cycle $F$ in $G$ or $G'$.
If $F$ does not exist in $G$ or $F$ is not facial in $G$ or $G'$, then $v_1$ is close to the edge $uv$ in $G'$.  Let us now
consider the case that $F$ is a facial cycle both in $G$ and $G'$.
As $v_1\not\in \{u,v\}$, the degree of $v_1$ is three both in $G$ and $G'$.  Let $x_1$ be the
neighbor of $v_1$ distinct from its neighbors on $F$.
Note that $x_1$ is small in $G$.  

Suppose first that $\gamma$ is a tetragram or a hexagram.
Observe that the removal of the edge $uv$ from $G'$ must decrease the degree
of some of the vertices affecting the security of $\gamma$,
change the length of one of the faces incident with the edge $v_1x_1$
affecting the security of $\gamma$,
or destroy a path affecting its safety.  Therefore, if $\{u,v\}\cap (V(F)\cup\{x_1\})=\emptyset$
and $v_1$ is not close to the edge $uv$ in $G'$,
then $u$ or $v$ is a small neighbor of $x_1$ in $G$ that is big in $G'$.
We conclude that $v_1$ is close to $u$ or $v$ in $G$.

Let us now consider the case that $\gamma=(v_1,v_2,\ldots,v_5)$ is a decagram or a pentagram.
As $\gamma$ is secure in $G$ or $G'$, all the vertices of $\gamma$ are small in $G$.
If $\{u,v\}\cap V(F)\neq \emptyset$, then $v_1$ is close to $u$ or $v$ in $G$, and thus
assume that this is not the case.  It follows that the degree of $v_i$ is the same in $G$ and $G'$,
for $1\le i\le 5$; in particular, $\deg(v_i)=3$ for $1\le i\le 4$.
Let $x_i$ be the neigbor of $v_i$ not incident with $F$, for $1\le i\le 4$.
Similarly, we conclude that $x_1$ and $x_3$ are small in $G$, and if $\gamma$ is a pentagram,
then $x_2$ and $x_4$ are small in $G$.  If $\{u,v\}\cap \{x_1,x_3\}\neq\emptyset$, or
$\gamma$ is a pentagram and $\{u,v\}\cap \{x_2,x_4\}\neq\emptyset$, then $u$ or $v$
is close to $v_1$ in $G$.  If this is not the case, then the removal or addition of $uv$
cannot affect the security of $\gamma$ if $\gamma$ is a decagram.

We are left with the case when $\gamma$ is a pentagram, and $\{u,v\}\cap \{x_1,x_2,x_3,x_4\}=\emptyset$.
It follows that the neighborhoods of $x_2$, $x_3$, $x_4$ and $v_5$ are the same in $G$ and in $G'$.
As $\gamma$ is secure in $G$ or $G'$, all neighbors of $v_5$ or $x_2$, and all
neighbors of $x_3$ or $x_4$ are small in $G$.  As $\gamma$ is not secure both in $G$ and $G'$,
the removal of $uv$
\begin{itemize}
\item destroys a path of length at most three between $x_2$ and $v_5$ or
between $x_3$ and $x_4$, or
\item removes an edge incident with the common neighbor $y$ of $x_3$ and $x_4$,
thus making the $5$-cycle $x_3v_3v_4x_4y$ facial, or
\item decreases the degree of a neighbor of $x_2$, $x_3$, $x_4$ or $v_5$,
making it small in $G$.
\end{itemize}
In all the cases, $u$ or $v$ is a small neighbor of $x_2$, $x_3$, $x_4$ or $v_5$,
and hence it is close to $v_1$ in $G$.
\end{proof}

The next theorem will serve as the basis for the proof of correctness of our
algorithm.  We defer its proof until the next section.

\begin{theorem}
\label{multigramexists}
Every non-null triangle-free planar graph has a secure multigram.
\end{theorem}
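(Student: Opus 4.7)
The plan is to upgrade the discharging arguments of Lemmas~\ref{grams} and~\ref{safegrams} so that the safe multigram produced lies in a ``small-vertex neighborhood'' and therefore automatically satisfies the security conditions.

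First I would dispose of several base cases. If $G$ has a vertex $v$ of degree at most $2$, then $(v)$ is a secure monogram. If $G$ has a $4$-face whose four vertices are all small and of degree $3$, that face yields a secure octagram; and if $G$ has a $4$-face with a small degree-$3$ pivot $v_1$ whose private neighbor $x$ is small and satisfies the neighbor condition in the definition, that face yields a secure tetragram. So I may assume $G$ has minimum degree $\ge 3$ and that all configurations around $4$-faces fail security by exhibiting a big vertex nearby.

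Second, following the proof of Lemma~\ref{safegrams}, I would carve out a subgraph $G_2\subseteq G$ bounded by a facial cycle $C_2$ of length at most $6$, chosen minimally so that $G_2$ contains no separating cycle of length at most $6$ and any short cycle in $G_2$ other than $C_2$ is a face. Apply Lemma~\ref{grams} inside $G_2$ to obtain a tetragram or pentagram $(v_1,\dots,v_k)$ with $v_1,\dots,v_4\notin V(C_2)$. Using exactly the same analysis as in Lemma~\ref{safegrams}, I may assume that either this multigram (or one of its symmetries) is safe, or that a safe hexagram arises from a short path forced by the minimality of $C_2$. In all cases the vertices $v_1,\dots,v_4$ have degree $3$, so they are small; and the private neighbors $x_i$ live in $G_2$.

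The main obstacle, and the heart of the proof, is ensuring the additional \emph{security} requirements: that the relevant neighbors $v_5, x_1,\dots,x_4$ are small, and in the pentagram case that either $v_5$ or $x_2$, and either $x_3$ or $x_4$, has no big neighbor. To force this I would run a second, refined discharging phase: assign each vertex initial charge $3\deg(v)-12$ and each face charge $3\ell-12$, but redirect charge so that each big vertex (degree $\ge 60$) absorbs a large surplus which it then distributes among the small vertices within distance $2$ of it. Because each big vertex has degree $\ge 60$ while the average degree in a triangle-free planar graph is less than $4$, the resulting ``excluded zone'' around all big vertices covers only a bounded fraction of the total charge, so some face $f\ne f_0$ still ends with negative final charge while being disjoint from every excluded zone. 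The multigram supplied by $f$ then has all of $v_5,x_1,\dots,x_4$ small \emph{and} has no big vertex within distance $2$ of the sensitive pairs, which implies the ``no forbidden neighbor'' clauses. The hexagram and tetragram reductions are easier because their security conditions only constrain $v_1,v_3,v_6$ and the private neighbor of $v_1$, all of which already lie in the excluded-zone-free region. The final step is a routine case check confirming that in each of the remaining sub-cases (tetragram, pentagram, hexagram, octagram, decagram) the produced safe multigram satisfies the corresponding security definition of Section~\ref{sec-alg}.
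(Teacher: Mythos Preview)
Your proposal has a genuine gap in the ``second, refined discharging phase.'' The claim that the excluded zones around big vertices ``cover only a bounded fraction of the total charge'' does not hold as stated. The total charge in the discharging is a fixed constant (namely $-1$), not a quantity scaling with $|V(G)|$, so ``bounded fraction'' is meaningless here. If instead you mean the excluded zones cover only a bounded fraction of the \emph{vertices}, that is also false: a big vertex has at least $60$ neighbours, each of which (if small) has up to $59$ further neighbours, so the distance-$2$ zone around one big vertex can contain thousands of vertices; since the number of big vertices can be of order $|V(G)|/15$, the union of excluded zones can blanket all of $G$. Hence nothing forces a face of negative final charge to lie outside every excluded zone, and you have not produced a multigram whose sensitive vertices are small and free of big neighbours. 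The pentagram security clause (``either $v_5$ or $x_2$, and either $x_3$ or $x_4$, has no $C$-forbidden neighbour'') is a distance-$2$ condition, and your argument gives no mechanism to enforce it.

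The paper's route is quite different. It proves the stronger Lemma~\ref{lem:semigram} (a $C$-secure multigram exists whenever $G$ has a short outer cycle $C$) via a \emph{single} discharging argument with charges $9\deg(v)-36$, $9\ell-36$, and six tailored redistribution rules. Rule~(B) has each big vertex send $3$ units to every incident face; rules~(D)--(F) have $C$-forbidden vertices push charge \emph{through} their small neighbours to faces two steps away; the constant $60$ is chosen exactly so that a big vertex remains non-negative after these payments. A detailed case analysis then shows that any face with negative final charge yields a $C$-secure multigram (with a delicate endgame for $4$-faces sharing an edge with $C$, where the octagram finally appears). Theorem~\ref{multigramexists} follows in a few lines by choosing any short facial cycle $C$ and invoking the lemma. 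Your two-phase plan---first run Lemma~\ref{safegrams} for safety, then bolt on a separate discharging for security---cannot work as written, because the safe multigram from phase one need not coincide with whatever face survives phase two; the paper avoids this by encoding the security conditions directly into one discharging scheme from the outset.
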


We are now ready to prove Theorem~\ref{main}, assuming
Theorem~\ref{multigramexists}.

\begin{algorithm}
\label{alg:alg}
There is an algorithm with the following specifications:\\
{\em Input:} A triangle-free planar graph.\\
{\em Output:} A proper $3$-coloring of $G$.\\
{\em Running time:} $O(|V(G)|)$.
\end{algorithm}

\begin{proof}[Description.]
Using a linear-time planarity algorithm that actually outputs an embedding,
such as~\cite{ShiHsu} or~\cite{Wil}, we can assume that $G$ is a plane
graph.
The algorithm is recursive. 
Throughout the execution of the algorithm we will maintain a list
$L$ that will
include the pivots of all secure multigrams in $G$,
and possibly other vertices as well.
We initialize the list $L$ to consist of all vertices of $G$ of degree at
most three. 

At a general step of the algorithm we remove a vertex $v$ from $L$.
There is such a vertex by Theorem~\ref{multigramexists} and the requirement
that $L$ include the pivots of all secure multigrams.
We check if $G$ has a secure multigram with pivot $v$.
This can be performed in constant time by Lemma~\ref{findgram}.
If no such multigram exists, then we go to the next iteration.
Otherwise, we let $\gamma$ be one such multigram, and let
$G'$ be the $\gamma$-reduction of $G$.
By Lemma~\ref{coloring} the graph $G'$ is triangle-free
and can be constructed in constant time by adding and deleting bounded
number of edges, and removing a bounded number of isolated vertices.
For every edge $uv$ that was deleted or added during the construction
of $G'$ we add to $L$ all vertices that are close to $u$ or $v$, or to the
edge $uv$ in $G$ or $G'$.  
By Lemma~\ref{adddeledge} this will guarantee that $L$ will include
the pivots of all secure multigrams in $G'$.
We apply the algorithm recursively to $G'$,
and convert the resulting $3$-coloring of $G'$ to one of $G$
using Lemma~\ref{coloring}.
Since the number of vertices added to $L$ is proportional to the
number of vertices removed from $G$ we deduce that the number of
vertices added to $L$ (counting multiplicity) is at most linear
in the number of  vertices of $G$.
Thus the running time is $O(|V(G)|)$, as claimed.
\end{proof}

Algorithm~\ref{alg:alg} has the following extension.

\begin{algorithm}
\label{alg:extalg}
There is an algorithm with the following specifications:\\
{\em Input:} A triangle-free plane graph $G$, a facial cycle $C$ in
$G$ of length at most five, and a proper $3$-coloring $\phi$ of $C$.\\
{\em Output:} A proper $3$-coloring of $G$ whose restriction to $V(C)$
is equal to $\phi$.\\
{\em Running time:} $O(|V(G)|)$.
\end{algorithm}

\begin{proof}[Description.]
The description is exactly the same, except that we replace
``secure" by ``$C$-secure" and appeal to Lemma~\ref{lem:semigram}
rather than Theorem~\ref{multigramexists}.
\end{proof}

\section{Proof of correctness}
\label{sec:correct}

In this section we prove Theorem~\ref{multigramexists}, thereby
completing the proof of correctness of the algorithm from the previous
section.
The theorem will follow from the next lemma.
If $xy$ is an edge in a plane graph, and $f$ is a face of $G$ incident
with $y$ but not with the edge $xy$, then we say that {\em $f$ is
opposite to $xy$}.
Let us emphasize that this notion is not symmetric in $x,y$.

\begin{lemma}
\label{lem:semigram}
Let $G$ be a connected triangle-free plane graph and let $f_0$ be its
outer face.  Assume that $f_0$ is bounded by a cycle $C$ of length at most six,
$V(G)\neq V(C)$, and if $C$ has length six, then $|V(G)-V(C)|\ge 2$.
Then $G$ contains a $C$-secure multigram.
\end{lemma}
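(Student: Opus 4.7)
The plan is to mimic the proof of Lemma~\ref{safegrams} with a refined discharging argument that keeps track not only of which vertices lie on $C$ but also of which vertices are \emph{big}. The octagram and decagram configurations enter exactly because near a big vertex an ordinary tetragram or pentagram may fail to be $C$-secure: a big vertex adjacent to a $4$- or $5$-face forces us to use a configuration whose $C$-admissibility constraints involve only vertices of the face itself, not their outside neighbors.

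First I would dispose of the trivial cases. If some vertex $v\in V(G)\setminus V(C)$ has degree at most two, then $(v)$ is a $C$-secure monogram, so we may assume every vertex off $C$ has degree at least three. Then, passing to innermost subgraphs inside $C$ exactly as in Lemma~\ref{safegrams}, we may assume $G$ contains no separating cycle of length at most five other than $C$ and no separating cycle of length exactly six other than $C$; equivalently, every cycle of length at most six in $G$ either equals $C$ or bounds a face. In particular $C$ is induced.

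Next, I would run a discharging argument in the style of Lemma~\ref{grams}, with initial charges $3\deg(v)-12$ on each vertex, $3|V(C)|+11$ on $f_0$, and $3\ell-12$ on every other face of length $\ell$, summing to $-1$ by Euler's formula. The redistribution must be tuned so that small vertices off $C$ receive their deficit from incident faces; vertices on $C$ receive theirs from $f_0$; and each big vertex, carrying surplus at least $3\cdot 60-12=168$, pushes sufficient charge to its incident faces to offset what those faces then pass on to small neighbors. As in Lemma~\ref{grams}, $f_0$ will end with non-negative charge, so some other face $f$ of length at most five is left with negative charge. A case analysis on the length of $f$, combined with an appeal to Lemma~\ref{grams} to identify the correct orientation, will yield a candidate multigram, and the argument concludes, as in Lemma~\ref{safegrams}, by producing a hexagram when the direct tetragram/pentagram choice fails safety.

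The main difficulty is tuning the rules so that the negative face $f$ always produces a genuinely $C$-secure configuration. The key idea is a forced dichotomy: either all vertices of $f$ and all their off-$f$ neighbors are small and off $C$, in which case the classical tetragram, pentagram, or hexagram of Lemma~\ref{grams} is directly $C$-secure; or some off-$f$ neighbor is big or on $C$, in which case the discharging forces $f$ to be a $4$- or $5$-face with all its boundary vertices small, off $C$, and of degree exactly three, so that $f$ supplies an octagram or decagram (whose $C$-security constraints involve only the $v_i$ and, for a decagram, the additional pair $x_1,x_3$). Safety of the output then follows from the closing argument of Lemma~\ref{safegrams}, using the absence of short separating cycles, while the hypothesis $|V(G)-V(C)|\ge 2$ when $|V(C)|=6$ is used to rule out a single degenerate configuration in which no non-monogram multigram is available.
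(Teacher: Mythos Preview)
Your high-level plan matches the paper's, but three of the steps you sketch would not go through as stated. First, the innermost-cycle reduction is not clean: a separating $6$-cycle $K$ with exactly one interior vertex cannot be recursed into, since the hypothesis $|V(G)-V(C)|\ge 2$ fails for the subgraph inside $K$. The paper therefore works with a minimal counterexample and proves only the weaker claim that every $(\le 6)$-cycle $K\ne C$ either bounds a face \emph{or} has length six with at most one interior vertex; this residual case is used essentially later when analyzing $4$-faces that share an edge with $C$. Second, the $3\deg(v)-12$ charges from Lemma~\ref{grams} are too coarse. The $C$-security of a tetragram or pentagram depends on whether the off-face neighbors $x_i$ themselves have $C$-forbidden neighbors, and the discharging must detect this at the level of the face $f$. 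The paper uses charges scaled by $9$ (with a separate formula $8\deg(v)-19$ on $C$) and six redistribution rules, some of which send charge two steps away: a $C$-forbidden vertex $x$ with a small degree-$3$ neighbor $y$ sends units not only to faces at $y$ but to faces at degree-$3$ neighbors of $y$. A single rule of the form ``big vertices push to incident faces'' cannot see this second neighborhood.

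Third, and most seriously, your dichotomy for when octagrams and decagrams appear is incorrect. Decagrams arise simply when all five pentagram vertices have degree three---not because some neighbor is big---the point being that decagram $C$-security needs only $x_1,x_3$ (not $x_2,x_4$) to be admissible. Octagrams do \emph{not} arise from a big neighbor forcing all face-vertices to have degree three. In the paper they emerge only at the very end, from a $4$-face $f$ sharing an edge with $C$ whose natural tetragram fails $C$-security: this forces an adjacent $5$-face $f(\gamma)$ also pinned to $C$, and then a \emph{global} count---the total charge is at most $-k$ when $k$ vertices of $C$ have degree $\ge 3$, against the net $+1$ contributed by each such ``bad'' pair $(f,f(\gamma))$---forces at least four bad tetragrams, which pins down enough structure to exhibit a $C$-secure octagram. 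This global counting step is the crux of the $4$-face case and is entirely absent from your outline.
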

\begin{proof}
Suppose for a contradiction that the lemma is false, and let $G$ be
a counterexample with $|E(G)|$ minimum.
We first establish the following claim.

\begin{itemize}
\item[(1)] {\em If $K\neq C$ is a cycle in $G$ of length at most six,
   then $K$ bounds a face, or $K$ has length six and the open disk bounded by $K$
   contains at most one vertex.}
\end{itemize}

\noindent
To prove (1) let $K$ be as stated, and let $G'$ be the subgraph of $G$
consisting of all vertices and edges that belong to the closed disk
bounded by $K$.  If $K$ does not satisfy the conclusion of (1),
then $G'$ and $K$ satisfy assumptions of Lemma~\ref{lem:semigram}.
From the induction hypothesis applied to $G'$ and $K$ we deduce
that $G'$ has a $K$-secure multigram.
However, every $K$-secure multigram in $G'$ is a $C$-secure multigram in $G$.
\medskip

It follows from (1) that $C$ is an induced cycle and that every tetragram in $G$ is safe.

We assign charges to vertices and faces of $G$ as follows.
Initially, a vertex $v$ will receive a charge of $9\deg(v)-36$ if
$v\not\in V(C)$, and $8\deg(v)-19$ otherwise.
The outer face $f_0$ will receive a charge of zero, and every other face $f$
of length $\ell$ will receive a charge of $9\ell-36$.
By Euler's formula the sum of the charges is equal to
\begin{eqnarray*}
&&\sum_{v\not\in V(C)} 9(\deg(v)-4) + \sum_{v\in V(C)}(8\deg(v)-19)
  +\sum_{f\ne f_0}9(\,\hbox{size}(f)-4)\\
&=&\sum_{v\in V(G)} 9(\deg(v)-4)+ \sum_{f}9(\hbox{size}(f)-4) 
  -\sum_{v\in V(C)}\deg(v) + 8|V(C)| +36\\
&=&8|V(C)|-\sum_{v\in V(C)}\deg(v)-36\le-1,
\end{eqnarray*}
because all vertices of $C$ have degree at least two, and at least
one has degree at least three by hypothesis.
Furthermore,

\begin{itemize}
\item[(2)]
   {\em if at least $k$ vertices of $C$ have degree at least three,
   then the sum of the charges is at most~$-k$.}
\end{itemize}

We now redistribute the charges according to the following rules.
The new charge thus obtained will be referred to as the {\em final} charge.
We need a definition first.
Let $f\ne f_0$ be a face of $G$ incident with a vertex $v\in V(C)$.
If there exist two consecutive edges in the boundary of $f$ such that
both are incident with $v$ and neither belongs to $C$, then we say
that $f$ is a {\em $v$-interior face}.
The rules are:
\begin{description}
\item[(A)] every face other than $f_0$  sends three units of
charge to every incident vertex $v$ such that either $v\in V(C)$ and
$v$ has degree two in $G$, or $v\not\in V(C)$ and $v$ has degree exactly three,
\item[(B)] every big vertex not on $C$ sends three units to each incident face,
and four units to each $4$-face that shares an edge with $C$,
\item[(C)] every vertex  $v\in V(C)$ sends three units to every
$v$-interior face,
\item[(D)] if $x\in V(G)$ is $C$-forbidden, and $y$ is a $C$-admissible
neighbor of $x$ of degree three, then $x$ sends three units to the unique 
face opposite to $xy$, and one unit to the face opposite to $yz$
for every $C$-admissible neighbor $z$ of $y$ of degree three,
\item[(E)] every $C$-forbidden vertex sends five units to every
$C$-admissible neighbor of degree at least four,
\item[(F)] for every $C$-admissible vertex $y$ of degree at least four
that has a $C$-forbidden neighbor
we select a $C$-forbidden neighbor $x$ of $y$ and let $y$ send
one unit to each 
face opposite to $xy$, and one unit to the face opposite to $yz$
for every $C$-admissible neighbor $z$ of $y$ of degree three.
\end{description}

Since $G$ does not satisfy the conclusion of the theorem,
it follows that every vertex of $G$ has degree at least two, and
every vertex of degree exactly two belongs to $C$.
With these facts in mind we now show that every vertex has non-negative
charge.
To that end let $v\in V(G)$ have degree $d$, and assume first that $v$ is $C$-admissible.
If $d=3$, then it starts out with a charge of $-9$ and
receives three from each incident face by rule (A) for a final total of zero.
If $d\ge4$, then $v$ starts out with a charge of
$9d-36\ge0$. If $v$ has no $C$-forbidden neighbor, then it sends no charge
and the claim holds. Thus we may assume that $v$ has a $C$-forbidden
neighbor, and let $x$ be such neighbor selected by rule (F).
Then $v$ receives at least five units by rule (E), and sends
at most $2d-3$ by rule (F) for a total of at least
$9d-36+5-(2d-3)=7d-28\ge0$.
Thus every $C$-admissible vertex has non-negative final charge.
If $v$ is big, but does not belong to $C$, then it sends only by rules (B),
(D) or (E). It sends at most $3d$ using the first clause of rule (B),
at most $24$ using the second clause of rule (B) and at most $5d$ using rules (D) or (E)
for a total final charge of at least $9d-36-3d-24-5d\ge0$, because $d\ge 60$.
Thus we may assume that $v\in V(C)$. 
Then $v$  starts out with a charge of $8d-19$ and sends a net total of
$3(d-3)$ using rules (A) or (C)
(if $d=2$, then $v$ receives $3$ by rule (A); and otherwise it sends
$3(d-3)$ by rule (C)) and
it sends $5(d-2)$ using rule (D) or (E) for a total of
$8d-19-3(d-3)-5(d-2)=0$.
This proves our claim that the final charge of every vertex is non-negative.

It also follows that every face of length $\ell\ge6$ has
non-negative final charge, for every face sends at most three units
to each incident vertex and only to those vertices by rule (A); thus the final
charge is at most $9\ell-36-3\ell\ge0$.

We have thus shown that $G$ has a face $f$ of length at most
five with strictly negative final charge.
Clearly $f$ is not the outer face.

\begin{itemize}
\item[(3)] {\em No vertex incident with $f$ has degree two.}
\end{itemize}

\noindent
To prove (3) suppose for a contradiction that a vertex $v$ of degree
two is incident with $f$. Thus $v$ and the two edges incident with
$v$ and $f$ belong to $C$. Since $G\ne C$ and $f$ has length at most five
we deduce that at least two vertices incident with $f$ are incident with
$C$ and have degree at least three. 
Those two vertices do not receive any charge from $f$, and hence $f$
has length four, because it has negative charge.

We deduce that $f$ is bounded by a cycle $u_1u_2u_3u_4$, where
$u_1,u_2,u_3$ are consecutive vertices of $C$, and $u_2$ has degree two.
It follows that $u_4\not\in V(C)$, because $C$ is induced.
Since $f$ has negative charge it does not receive charge by rule (B), and hence $u_4$ is small and
$C$-admissible.
Let $C'$ be the cycle obtained from $C$ by replacing the vertex $u_2$
by $u_4$; note that $|V(C')|=|V(C)|\le 6$.  As $u_4$ has degree greater than
two, $C'$ does not bound a face, hence it follows from (1) that $|V(C')|=6$
and the open disk bounded by $C'$ contains at most one vertex.
Therefore, it contains exactly one, because $|V(G)|-V(C)|\ge 2$.
Let that vertex be $v_4$; then the remaining vertices of $C$ can be
numbered $v_1,v_2,v_3$ so that the cycle $C$ is $u_1u_2u_3v_1v_2v_3$
and $v_4$ is adjacent to $v_1$, $v_3$ and $u_4$.
Then $(u_4,u_1,u_2,u_3)$ is a $C$-secure tetragram,
contrary to the assumption that $G$ is a counterexample to the theorem.
This proves (3).
\medskip

Let $uv$ be an edge of $G$ such that $f$ is opposite to $uv$.
Let us say that $v$ is a 
{\em sink} if $v$ has degree three and both $u$ and $v$ are $C$-admissible.
Let us say that $v$ is a 
{\em source} if either $v\not\in V(C)$ and $v$ is big, or $v\in V(C)$ and 
$f$ is $v$-interior.
Since $v$ does not have degree two by (3) we deduce that
$v$ is a sink if and only if it has degree three and receives three units of charge from $f$
by rule (A) and $f$ does not receive three units by rule (D) 
from $u$.
Likewise, the vertex $v$ is a source if and only if it sends three
units to $f$ by the first clause of rule (B) or by rule (C).
Let $s$ be the number of sources, and $t$ the number of sinks.
Thus the charge of $f$ is at least $9+3s-3t$ if $f$ has length five
and at least $3s-3t$ if $f$ has length four.

Let us assume now that $f$ has length five, and let $v_1,v_2,\ldots,v_5$
be the incident vertices, listed in order. Since $f$ has negative charge,
at least four of the five incident vertices are sinks, and so
we may assume that $v_1,v_2,v_3,v_4$ are sinks. 
Thus $\gamma=(v_1,v_2,\ldots,v_5)$ is a pentagram.
For $i=1,2,3,4$ let $x_i$ be the neighbor of $v_i$ distinct
from $v_{i-1}$ and $v_{i+1}$ (where $v_0=v_5$).
From (1) and the fact that $G$ has no $C$-secure tetragram we deduce
that the vertices $x_1,x_2,x_3,x_4$ are distinct and pairwise
non-adjacent.
If $v_5$ is a $C$-admissible vertex of degree three, then
it follows from (1) that $\gamma$ is $C$-secure decagram---otherwise,
if there is a path of length two between $x_1$ and $x_3$, then consider
the $6$-cycle $K=x_1v_1v_2v_3x_3y$.  By (1) the open disk bounded by $K$ includes at most
one vertex of $G$.  It follows that $v_4$ and $v_5$ are not inside the disk;
thus either $y=x_2$ or $x_2$ is inside the disk.  In either case,
it follows that $x_2$ is adjacent to $x_1$ and $x_3$, a contradiction.
Thus $v_5$ is either not $C$-admissible, or has degree at least four.

Therefore, $v_5$ is not a sink, and hence the final charge of $f$ is
at least $-3$.
It follows that $v_5$ is not a source, which in turn implies that
$v_5$ is $C$-admissible (because $v_1$ and $v_4$ are $C$-admissible),
and hence has degree at least four.
We claim that $\gamma$ is a safe pentagram.
If there exists a path $P$ in $G\setminus\{v_1,v_2,v_3,v_4\}$
of length at most three with ends $x_2$ and $v_5$, then $P$ can be
completed to a cycle $K$ using the path $v_5v_1v_2x_2$.
By (1) we conclude that this cycle bounds an open disk that contains at most one
vertex, and it follows that $x_1$ is adjacent to $x_2$, which is a contradiction.
In order to complete the proof that $\gamma$ is safe it suffices
to consider a path in $G\setminus\{v_1,v_2,v_3,v_4\}$ 
of length at most three with ends $x_3$ and $x_4$.
This path can be completed via the path $x_4v_4v_3x_3$ to a cycle $K'$.
Since $v_3$ and $v_4$ have degree three, and $x_3$ is not adjacent to $x_4$,
we deduce from (1) that $K'$ is a facial cycle.
Since $x_3$ is not adjacent to $x_4$ 
we may assume for a contradiction that $K'$ has length six; let its vertices in order
be $x_3v_3v_4x_4ab$.
Then $(v_4,v_3,x_3,b,a,x_4)$ is a $C$-secure hexagram in $G$, a contradiction.
This proves our claim that $\gamma$ is a safe pentagram.
By symmetry the pentagram $(v_4,v_3,v_2,v_1,v_5)$ is also safe.
We have already established
that the vertices $v_1,v_2,\ldots,v_5,x_1,x_2,x_3,x_4$ are $C$-admissible.
If $x_i$ has a $C$-forbidden neighbor for some $i\in\{1,2,3,4\}$, then
$f$ receives one unit of charge either from that neighbor by rule (D)
if $x_i$ has degree three, or from $x_i$ by rule (F) otherwise.
Since the degree of $v_5$ is greater than three, if $v_5$ has a $C$-forbidden neighbor,
then it sends one unit of charge to $f$ by rule (F).
Thus at most two vertices among
$v_5,x_1,x_2,x_3,x_4$ have a $C$-forbidden neighbor, and hence
it follows that either $\gamma$, or
$(v_4,v_3,v_2,v_1,v_5)$ is a $C$-secure pentagram, a contradiction.

Thus we have shown that $f$ has length four.
Let $v_1,v_2,v_3,v_4$ be the incident vertices listed in order.
Let us recall that every tetragram is safe.
Since $f$ has negative charge at least $3s-3t$, 
we may assume that $v_1$ is a sink
and $v_3$ is not a source.
Since $v_3$ is not a source and $\gamma$ is not a $C$-secure
tetragram, $v_3\in V(C)$ and $f$ is not $v_3$-interior.
Then, (3) implies that exactly one of $v_2v_3$, $v_3v_4$ is an edge of $C$, and hence we may
assume the latter. In particular, $v_2\not\in V(C)$.
If $v_2$ is a sink, then the charge of $f$ is at least $-6$,
otherwise it is at least $-3$.

Let $v$ be the neighbor of $v_1$ other than $v_2$ and $v_4$.  Since
$v_1$ is a sink, $v$ is $C$-admissible.
If $v$ has no $C$-forbidden neighbor, then $\gamma$ is 
a $C$-secure tetragram, a contradiction.
Thus $v$ has a $C$-forbidden neighbor $u$.
Suppose first that $u\not\in V(C)$; hence $u$ is big and $f$ receives $4$ units of
charge from $u$ by rule (B).  As the charge of $f$ is negative,
we conclude that $v_2$ is a sink.  Let $v'$ be the neighbor of $v_2$ distinct
from $v_1$ and $v_3$. Since $\gamma$ is not a $C$-secure tetragram, $v'$
has a $C$-forbidden neighbor $u'$.  However, by rules (D) and (F),
$f$ receives one unit of charge from each of $u$ and $u'$,
making its final charge nonnegative.

We conclude that every $C$-forbidden neighbor of $v$ belongs to $C$.
Since rules (D) or (F) still apply, we obtain

\begin{itemize}
\item[(4)] {\em each $4$-face $f$ that shares an edge with $C$ has
final charge at least $-2t$, where $t\in\{1,2\}$ is the number of sinks of $f$.}
\end{itemize}

As $\gamma$ is not a $C$-secure tetragram, at least
one $C$-forbidden neighbor $u$ of $v$ is adjacent to neither $v_2$ nor $v_4$.
Let $C,C_1,C_2$ be the three cycles in the graph consisting of $C$
and the path $uvv_1v_4$, numbered so that $v_3$ belongs to $C_2$.
We claim that $C_2$ has length at least seven.  Note that $v_2$
lies in the open disk bounded by $C_2$; thus by (1) the cycle $C_2$ has length at least six.
Assume that $C_2$ has length exactly six. By (1), the open disk it bounds contains $v_2$ and no other
vertex of $G$. It follows that $v_2$ has degree three and is adjacent
to $u$, which contradicts the choice of $u$.

It follows that $C_2$ has length at least seven, and hence
$C_1$ has length at most five, and by the choice of $u$, it has length exactly five.
By (1), $C_1$ bounds a face.
Thus $u$ and $v_4$ have a common neighbor of degree two on $C$, say $z$.
Let $f(\gamma)$ denote the face bounded by $C_1$.  Let us call each tetragram
for which $f(\gamma)$ is defined {\em bad}.  Note that at this point, we have
proved that bad tetragrams are the only faces of $G$ with negative final
charge.  Let $b$ be the number of bad tetragrams.

The face $f(\gamma)$ starts out with a charge of $9$, sends three units
to each of $v_1,v,z$ by rule (A), and receives one 
either from $v_3$ by rule (D), or from $v_2$ by rule (F) for a total
of $+1$.  Also, if there exists a tetragram $\gamma'$ distinct
from $\gamma$ such that $f(\gamma)=f(\gamma')$, then the final charge
of $f(\gamma)$ is at least $+2$.  It follows that the total
charge of $G$ is at least $-b$.

Since $v_3$, $v_4$ and $u$ have degree at least three,
by (2) the total charge of $G$ is at most $-3$, and so $b\ge 3$.
However, since $b>1$, there must be another bad tetragram, giving
at least one more vertex of $C$ of degree at least three.
Therefore, the final charge of $G$ is at most $-4$ by (2), and hence $b\ge 4$.  
Let $u'$ be the unique neighbor of $u$ in $C\backslash z$. Since $b\ge4$
it follows by inspection that $v_3v_4$ and $uu'$ are the only edges of $C$
that belong to a bad tetragram.
We deduce that $G$ has a vertex $v'$ of degree three with neighbors $v,v_2,u'$.
It follows that $(v,v',v_2,v_1)$ is a $C$-secure octagram, as desired.
\end{proof}

\begin{proof}[Proof of Theorem~\ref{multigramexists}.]
Let $G$ be a triangle-free planar graph.
We may assume that $G$ is actually drawn in the plane.
If $G$ has a vertex of degree two or less, then it has a secure monogram,
and so we may assume that $G$ has minimum degree at least three.
It follows that $G$ has a facial cycle $C$ of length at most five.
Let $H$ be the component of $G$ containing $C$. We may assume that $C$
bounds the outer face of $H$. Since $H$ has minimum degree at least three
it follows that $V(H)-V(C)\ne\emptyset$.
By Lemma~\ref{lem:semigram} $H$ has a $C$-secure multigram;
but any $C$-secure multigram in $H$ is a secure multigram in $G$,
as desired.
\end{proof}

\centerline{\bf Acknowledgement}

We are indebted to a referee for carefully reading the manuscript and for
pointing out a couple of errors.

\end{document}